\documentclass[letterpaper,11pt]{article}
\usepackage{amsmath, amsthm, amssymb}    
\usepackage{bridges}			
\usepackage{graphicx}			
\usepackage[colorlinks=true, urlcolor=blue, citecolor=black, linkcolor=black]{hyperref}  
\usepackage{subcaption}			
\usepackage{multirow}
\newtheorem{theorem}{Theorem}

\title{Interleaved Friezes: Celtic Knotwork and Hitomezashi}

\author{Katherine A. Seaton
\vspace{10pt}\\
La Trobe University, Australia; k.seaton@latrobe.edu.au} 

\date{}					

\begin{document}

\maketitle

\thispagestyle{empty}

\begin{abstract}

Both Celtic knotwork and strips of hitomezashi stitching can be interpreted as being two-sided friezes wherein the patterns on the sides are interleaved. We prove which of the thirty-one two-sided friezes can, and cannot, be realized in hitomezashi, and compare this to the case of Celtic knotwork.

\end{abstract}

\section*{Two-Sided Friezes}

A \emph{frieze} is an infinitely long strip decorated with a pattern that repeats periodically by \emph{translation} along the axis of the strip. Physical friezes such as we see decorating contemporary or ancient architecture, homewares, or textiles of many cultures \cite{Symm} have finite length. We politely disregard this when we analyze a frieze mathematically, and treat the pattern as if it repeats indefinitely in one dimension. The width of a physical or mathematical frieze is finite, and generally we think of it as having no thickness. 

The pattern on a frieze may appear unchanged i.e., be symmetric, under one or more isometric transformations: \emph{rotation} through 180$^\circ$, \emph{reflection} across or perpendicular to the strip's long axis (which we take to be horizontal), and \emph{glide reflection}. A {glide reflection} is a single transformation, the net result of reflection across the frieze axis accompanied by a translation parallel to it (the glide). The two friezes shown in Figure~1, adapted from a hitomezashi wallpaper pattern devised by Sen and Martinez \cite{Sen}, have glide reflection as the only symmetry. 
Translation and rotation can actually be performed upon a physical object, but the effect of a reflection can be seen only with the mind's eye. There are exactly seven collections of the isometric transformations which are consistent with the group axioms, and these are the well-known \textit{frieze groups}. 

\begin{figure}[h!tbp]
	\centering
	\includegraphics[height=1.4in]
    {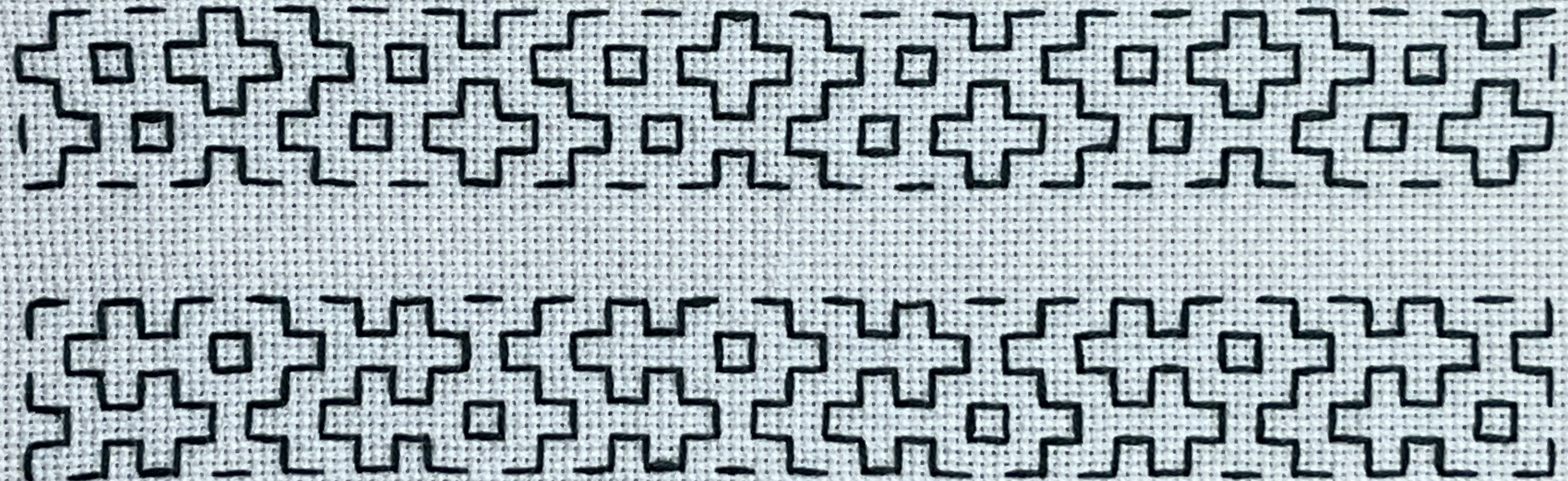}
	\caption{Two hitomezashi friezes with glide reflection symmetry.}
	\label{p1a1}
\end{figure}

Readers may also be aware of the two colour or \textit{colour-reversing frieze groups}, of which there are seventeen \cite{Symm}. Less well-known are the symmetry groups of \textit{two-sided friezes}. Farris and Rossing \cite{FarrisRossing} constructed and considered woven rope friezes representing each of the one-colour one-sided friezes, while noting that such analysis did not fully capture the three-dimensional features of these interwoven friezes. As Cromwell pointed out, even in illuminated manuscripts decorated with borders of Celtic knotwork, parts of the pattern seem to our senses to live just above the two-dimensional page, and other parts appear to live just below it \cite{Cromwell}. For a frieze which has two sides there are more possible isometries to consider, transformations that take the `front' to the `back'. In visualizing these, it is probably helpful to imagine that there is a tiny gap between the two sides of such a frieze. The plane of the frieze is half-way between the two sides, in this gap. 

As well as the familiar 180$^\circ$ rotation about an axis perpendicular to the plane of the frieze, for a two-sided frieze it is possible to perform a 180$^\circ$ rotation either about the horizontal axis or a vertical line in the plane of the frieze. Another transformation which can be physically performed is a \textit{screw} or roto-translation: 180$^\circ$ rotation about the horizontal axis is accompanied by a translation parallel to it. We can also imagine reflection across the plane of the frieze.  Finally, frieze geometry admits a \textit{roto-reflection}, wherein 180$^\circ$ rotation about an axis perpendicular to the plane of the frieze is accompanied by reflection in the plane of the frieze.

There are thirty-one two-sided frieze groups. We defer listing them, because they will appear below in Tables 1 and 2.  In this paper, we use the same notation for them as did Cromwell \cite{Cromwell}, apart from using the more standard lower-case p (he uses P) as the first letter in each label. Each label has the form p\,$\ast\ast\ast$. We specify three axes: $\mathbf{a}$ lies in the plane of the frieze and is the horizontal axis of the frieze; $\mathbf{b}$ lies in the plane of the frieze and is perpendicular to $\mathbf{a}$; $\mathbf{c}$ is perpendicular to the plane in which the frieze lies. In the four symbol labelling, the second position corresponds to behaviour relative to axis $\mathbf{a}$, the third to $\mathbf{b}$ and the fourth to $\mathbf{c}$. The following symbols are used to describe the symmetry behaviour relative to the axes:
\begin{itemize}
\item 1 is a place-holder, indicating no particular symmetry behaviour relative to the axis;
    \item `m' indicates  reflection symmetry in a plane \textit{perpendicular to} the axis (N.\ B. not across this axis);
    \item `a' indicates  glide reflection symmetry, the reflection being in a plane perpendicular to the axis and the glide being parallel to $\mathbf{a}$;
    \item 2 indicates  rotation symmetry of order 2 (that is, of $180^\circ$ degrees) about the axis;
    \item 2$^\prime$ indicates symmetry under the screw (and can only appear in the second position);
    \item $\tilde{2}$ indicates symmetry under roto-reflection (and can only appear in position four).
    \end{itemize}
Where two of these symbols apply to an axis the symbols are both shown, stacked, in the corresponding position.

\section*{Celtic Knotwork Friezes}
Seven of the two-sided friezes have a straightforward relationship to the one-sided friezes: the two sides are mirror images of each other across the frieze plane, and the labelling has the form p\,$\ast \ast$\,m.  In knotwork, be it physical \cite{FarrisRossing} or drawn \cite{Cromwell}, strands cross over and under each other at an angle. Because of such crossing points, a knotwork frieze cannot exhibit reflection symmetry across the frieze plane. 


One way to draw a Celtic knotwork frieze is to draft a panel of plaited strands using a grid. Some of the crossings are then removed and replaced with bends within two strands, in a regular manner. The knotwork retains a strict alternating structure when strands cross. Cromwell used this feature to argue that fourteen other of the remaining twenty-four two-sided frieze symmetries cannot be realised in Celtic knotwork \cite{Cromwell}. He showed one or more examples for each of the remaining ten,  either adapted from historical artefacts such as The Book of Kells or Celtic stonework, or of his own devising. His findings are summarized within Table~1.

For Celtic knotwork friezes the patterns on the two sides are interleaved and are not independent of one another.  A similar relationship exists between the two sides of a piece of hitomezashi stitching. The main purpose of this paper is to apply analysis similar to that of Cromwell for Celtic knotwork and that of Sen and Martinez \cite{Sen} for one-sided hitomezashi wallpaper patterns to two-sided hitomezashi friezes. (A full consideration of hitomezashi wallpaper patterns as two-sided would, as the discussion of chain mail by Farris did \cite{FF2},  invoke the eighty layer groups,  well beyond the scope of eight pages!) We find that the constraints arising for hitomezashi are not the same as those for Celtic knotwork, so that some different frieze patterns can be realized. Table 1 is a summary of the friezes which can be stitched, the friezes displayed in \cite{Cromwell}, and those for which neither this work or Cromwell's can supply an example.

\begin{table}[h!tbp]
\centering
\caption{Realizations of Two-Sided Frieze Symmetries in Celtic Knotwork and in Hitomezashi.}
\begin{tabular}{|c|c|c||cc|}
\hline
\rule{0pt}{2.5ex}Hitomezashi only& Celtic Knotwork only& \hspace{.8cm}Both\hspace{.8cm} &\multicolumn{2}{|c|}{Neither}
\\ 
\hline
p11a&p121&p111&p2aa&p11m\rule{0pt}{3ex}\\
p11$\stackrel{2}{\textrm{a}}$&p1$\stackrel{2}{\textrm{a}}$1&p112&p$\stackrel{2}{\textrm{m}}11$&p2mm\\
p1m1& p211&p1a1&p1$\stackrel{2}{\textrm{m}}$1&p11$\stackrel{2}{\textrm{m}}$\\[1.2ex]  
 pm11&p222&p11$\tilde{2}$&pm2a&p2$^\prime$am\\
 p$\stackrel{2^\prime}{\textrm{m}}$11&p2$^\prime$22&p2$^\prime$11 &pmma&pm2m\\[1.2ex]
p2$^\prime$ma &&  & pmaa &pmmm\\[1.2ex] 
 pmm2 & &  & &pmam\\[1.2ex]
 pma2&&&&\\[1.2ex]
\hline
\end{tabular}
\end{table}

\section*{Hitomezashi Friezes}

Hitomezashi is a form of traditional Japanese stitching, born from frugality in the Edo period but now used for decoration as well as visible mending. In recent years, its intrinsic mathematical properties have been studied and the number of related papers that have appeared since \cite{HayesSeat} has continued to grow year-on-year. Of direct relevance to the analysis we are about to embark on are \cite{SeatHayes}, \cite{Seaton} and \cite{Sen}.

We have already seen friezes stitched in hitomezashi in Figure 1. We notice that they are comprised of horizontal and vertical lines of running stitch on a square grid. Stitches and gaps alternate strictly in each line of stitching. At each vertex of the grid, apart from along the top and bottom of the friezes, one horizontal stitch and one vertical stitch form a right-angle corner. 

 On the other side of the fabric, a gap lies behind each stitch on the front, and a stitch lies behind each gap on the front. When we stitch any hitomezashi pattern on one side of a piece of fabric we get a second one on the other side. (Not all forms of embroidery have this property.) This second pattern is the complement or dual of the first, and it may look quite different. We get no choice about what it looks like, because it is completely determined by the stitching we placed on the front. In fact, the friezes in Figure 1 are dual to each other! 

 It is possible for a hitomezashi pattern to be self-dual under translation \cite{SeatHayes} or rotation \cite[p. 119]{Seaton}. That is, the pattern on the reverse of the fabric may be a shifted or rotated copy of the pattern on the front. For this reason, considering a pattern together with its dual as a two-sided frieze should give examples of at least some of the thirty-one two-sided friezes.  The frieze in Figure 1 is an example of p1a1 (no particular symmetry apart from the glide reflection in the plane perpendicular to axis $\mathbf{b}$).

 Note that in order to display both sides of the two-sided frieze in the correct relative position, I have stitched two versions of it one below the other, showing the obverse and reverse both facing the viewer. The `reverse' is not what one would see by picking up the upper length of hitomezashi from Figure 1 and turning it over, because that introduces a rotation. Rather, the lower stitching is what one would see in a mirror placed behind the upper frieze if one peeped around the fabric. The top, bottom, left and right of the obverse and reverse are in correct orientation to each other. For a fuller explanation of duality see \cite{SeatHayes} or \cite[Ch. 4]{Seaton}.

The complementary nature of the stitching on the two sides prevents hitomezashi two-sided friezes from being symmetric under some groups of transformations. In order to prove which these are, mathematical notation to specify hitomezashi stitch patterns introduced in \cite{HayesSeat} is helpful.

\newpage
Because the first stitch in each line of stitching determines all the others, we need only keep track of whether the first stitch is present (1) on the front or not present on the front (0). If the first stitch is present (respectively not present), then so too is the third, fifth etc., while the second, fourth etc. are not present (respectively present). One binary word $x$ is used to specify the state of the vertical lines of stitching; for friezes this word will be periodic. A second binary word $y$ of finite length can be used to specify the state of the horizontal lines of stitching in the frieze. For the reverse of such a binary word $z$ we write $z^R$, for the binary complement ($0 \leftrightarrow 1$ in $z$) we write $z^C$ and for the length of $z$ we write $|z|$.

In their study of hitomezashi wallpaper patterns, Sen and Martinez \cite{Sen} established results equivalent to the following:
\begin{enumerate}
    \item For any binary word $z$, $z\neq z^C$. Further, if $|z|$ is odd, $z\neq z^{RC}$.
    \item There is reflection symmetry in the plane perpendicular to $\mathbf{a}$ iff $x=x^R$ and  $|x|$ is even.
\end{enumerate}

\begin{theorem} None of the two-sided frieze patterns with the symbol \textrm{`m'} in the fourth position can be realised in hitomezashi. 
\end{theorem}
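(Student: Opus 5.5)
The symbol `m' in the fourth position means the frieze is symmetric under reflection in the plane perpendicular to $\mathbf{c}$, that is, reflection across the plane of the frieze itself. This reflection swaps the two sides pointwise, leaving every point of the pattern in the same position in the $\mathbf{a}$--$\mathbf{b}$ plane. So the approach is to show directly that no hitomezashi pattern can be invariant under this one transformation. Since every group with `m' in the fourth position contains it, none of those groups can be realised.

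First, I would translate the reflection into the binary-word notation. Because the reverse is displayed as seen in a mirror behind the fabric, with top, bottom, left and right correctly aligned, reflection in the frieze plane maps the obverse onto the displayed reverse with no shift, rotation or flip in the plane. Invariance therefore means the obverse equals its own dual at every grid edge.

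Second, I would describe the dual in the notation. At every edge, a stitch on the front corresponds to a gap on the back, so the dual pattern is specified by the complemented words $x^C$ and $y^C$. Invariance thus requires $x = x^C$, and also $y=y^C$.

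Third, I would invoke the first result of Sen and Martinez stated above: for any binary word $z$, $z\neq z^C$. Applied to $x$ (or to $y$, either suffices), this gives the contradiction. The same argument can be put without notation: a single edge of the grid would have to carry a stitch on both sides, or a gap on both sides, which is impossible.

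The main obstacle is the first step: being careful that the reflection really corresponds to identity-position comparison with the dual, with no accompanying reversal $z^R$. Other transformations, such as the roto-reflection or $2$ about $\mathbf{a}$, do introduce a reversal, and there the odd-length condition matters. I would check this against the description of the displayed reverse in Figure~1, and argue that reflection in the frieze plane fixes every point of that plane, so the only change is front $\leftrightarrow$ back, which is exactly complementation.
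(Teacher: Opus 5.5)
Your proposal is correct and follows essentially the same route as the paper: reflection in the frieze plane would force the pattern given by $x$ and $y$ to coincide, at the same position, with its dual given by $x^C$ and $y^C$, which (I) rules out. Your extra checks are sound refinements of the paper's one-line proof, namely that no reversal $z^R$ enters and that a single edge would need a stitch on both sides or a gap on both sides.
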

\begin{proof} We compare the pattern specified by $x$ and $y$ to its complement on the other side of the frieze which is specified by $x^C$ and $y^C$. These are not the same pattern, by (I).
\end{proof}

\begin{theorem} Frieze patterns with labelling of the form p2$\ast\ast$ or p{$\ast$}2$\ast$ cannot be realised in hitomezashi.

\end{theorem}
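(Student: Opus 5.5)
A `2' in the second position means the frieze is invariant under a $180^\circ$ rotation about the horizontal axis $\mathbf{a}$. A `2' in the third position means it is invariant under a $180^\circ$ rotation about a vertical axis parallel to $\mathbf{b}$ in the frieze plane. Both rotations exchange the front and the back. The plan is to show that in either case the stitching on the front would have to equal, up to a reflection of the grid, the stitching on the back. In the binary-word description this forces a word to equal its complement, or its reverse-complement with odd length. Both are ruled out by (I), just as in the proof of Theorem~1.

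For a rotation about $\mathbf{a}$, I would first factor it. It equals reflection in the frieze plane (front $\leftrightarrow$ back) followed by reflection in the horizontal plane containing $\mathbf{a}$ (top $\leftrightarrow$ bottom). Since the reverse is drawn as seen in a mirror behind the fabric, the condition becomes: the back pattern, given by $x^C$ and $y^C$, is the top-to-bottom mirror image of the front pattern. Now I track the words. Reflecting top-to-bottom reverses the finite word $y$. It also acts on each vertical line by reversing which segments along that line are stitches. A vertical line has $|y|+1$ unit segments, since it meets the $|y|$ horizontal lines. If $|y|+1$ is odd, that reflection sends segment $1$ to segment $|y|+1$, which has the same parity, so each vertical line keeps its first-stitch state and $x$ is unchanged. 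The condition then reads $x^C = x$, which is impossible by (I). If $|y|+1$ is even, the vertical states flip, so the condition gives no contradiction from $x$. In that case I would look at $y$ instead. A horizontal line is infinite, and its state is unaffected by a top-to-bottom flip apart from a possible shift, so $y^C = y^R$, that is $y = y^{RC}$. Since $|y|$ is odd in this case, (I) rules this out.

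The parity bookkeeping needs care; it is the main obstacle. I must fix conventions for the endpoints of the strip, decide which horizontal lines count as boundary lines, and check whether a horizontal translation (a choice of axis position) could absorb the complement of $x$. Since $x$ is periodic, a shift by one unit is not harmless. Shifting by one unit moves the stitch/gap phase, which complements $y$ rather than $x$. So in each parity case I need to examine both $x$ and $y$. I would first try to organise this as a small table of four cases: the two parities of $|y|$, crossed with whether the rotation axis sits on a grid line or midway between grid lines. In each case I expect the contradiction to reduce to $x=x^C$, $y=y^C$ or $y=y^{RC}$ with $|y|$ odd.

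For a rotation about a vertical axis, I would argue the same way with the roles of the words exchanged. This rotation equals front/back reflection composed with the left-right reflection perpendicular to $\mathbf{a}$. The back must therefore equal the left-right mirror of the front, which requires $x^C$ to equal $x^R$ up to translation. Meanwhile the horizontal lines each have their phase reversed or preserved according to the parity of the period. A horizontal line's state is fixed up to the axis position, so the same case analysis forces either $y^C = y$ or a reverse-complement equality for a word of odd length. Both contradict (I), and in this step I would reuse the parity argument underlying (II).
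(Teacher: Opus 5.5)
Your argument for the rotation about $\mathbf{a}$ is the paper's proof. You show $x\mapsto x$ or $x^C$ according to the parity of $|y|$ and $y\mapsto y^R$, then compare with $(x^C,y^C)$ and use the two halves of (I). The paper counts $|y|-1$ segments on a vertical line, because the top and bottom rows of the strip are themselves horizontal lines of stitching; your $|y|+1$ has the same parity, so nothing changes. Your worry that a horizontal shift might absorb the complement is misplaced here. A pure rotation about $\mathbf{a}$ moves nothing horizontally. Composing it with a translation gives the screw $2'$, a different symbol, and that one genuinely is realizable: the paper's example with $x=\overline{011}$, $y=010$ has a screw by three units. So that freedom must not be allowed.

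For the rotation about a vertical axis, which the paper only calls ``analogous'', your sketch gets the governing parity wrong. Whether a horizontal line keeps or reverses its phase depends on where the axis sits, not on the parity of the period. Also, ``$x^C=x^R$ up to translation'' is not contradictory by itself, since $x=\overline{01}$ satisfies it; the contradiction only comes from tying the translation to the axis position. The correct split is exactly the parity argument behind (II):
\begin{itemize}
\item If the axis is midway between two vertical lines, each horizontal line keeps its phase, so $y\mapsto y$. Invariance would then need $y=y^C$, contradicting (I).
\item If the axis lies on a vertical line, each horizontal line has its phase reversed, so $y\mapsto y^C$ and matches the back automatically. But the vertical line on the axis is mapped to itself, while its back is its complement. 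Its state would therefore equal its own complement, which is the odd-length case $z=z^{RC}$ of (I), with $z$ a window of $x$ centred on the axis.
\end{itemize}
With that correction your proof is complete.
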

\begin{proof} The total number of stitches and gaps in the vertical lines of stitching is $|y|-1$.  Under 180$^\circ$ rotation about $\mathbf{a}$ 
\[
x \mapsto \begin{cases}x&|y|\textrm{ even} \\
x^C&|y|\textrm{ odd} \end{cases} \qquad \textrm{and}\qquad y \mapsto y^R.
\]
We compare the pattern thus generated with that specified by $x^C$ and $y^C$. These patterns cannot be same if $|y|$ is even, by (I). If $|y|$
is odd, we compare $y^R$ and $y^C$, but they are not equal by the second part of (I).

An analogous argument applies for rotation about $\mathbf{b}$.
\end{proof}

\begin{theorem} Frieze patterns with labelling of the form pm$\ast$a cannot be realised in hitomezashi.
\end{theorem}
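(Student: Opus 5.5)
The plan is to show that any group of type pm$\ast$a necessarily contains a $180^\circ$ rotation about an axis parallel to $\mathbf{b}$. Theorem 2 (more precisely, the argument in its proof) then shows that no hitomezashi frieze has this symmetry. So the strategy is a composition of symmetries followed by a reduction to the previous theorem, rather than a new computation with the words $x$ and $y$.

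\emph{Step 1: the composition.} Put coordinates $(s,t,u)$ along $\mathbf{a},\mathbf{b},\mathbf{c}$, so the frieze plane is $u=0$.
\begin{itemize}
\item The symbol `m' in the second position gives a reflection in a plane perpendicular to $\mathbf{a}$. We may take it to be $(s,t,u)\mapsto(-s,t,u)$. It keeps each side of the fabric on its own side.
\item The symbol `a' in the fourth position gives a glide reflection in the frieze plane, $(s,t,u)\mapsto(s+g,t,-u)$ for some glide $g$. It exchanges front and back.
\end{itemize}
Composing the two gives
\[
(s,t,u)\mapsto(-s+g,\,t,\,-u).
\]
This is the $180^\circ$ rotation about the line $s=g/2$, $u=0$. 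That line lies in the frieze plane and is parallel to $\mathbf{b}$. Since symmetries form a group, any pattern with symmetry group pm$\ast$a is also invariant under this rotation. In Cromwell's labelling this means a 2 must also appear in the third position (so the group is really pm2a), but we do not need that bookkeeping.

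\emph{Step 2: invoke Theorem 2.} A pattern invariant under a $180^\circ$ rotation about an axis parallel to $\mathbf{b}$ has a group of the form p$\ast$2$\ast$. By Theorem 2, no such pattern can be realised in hitomezashi. In the words of that proof: the rotation carries the front, specified by $x,y$, to the back, and the result must equal the pattern specified by $x^C,y^C$. The rotation reverses $x$, complementing it or not according to the parity of $|x|$, and leaves the horizontal lines $y$ either fixed or complemented. Either way, one of the two equalities $y=y^C$ or $x^R=x^C$ (with the relevant length odd) would be forced, and both are excluded by (I).

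\emph{Main obstacle.} The delicate point is Step 2. The rotation axis sits at the position $s=g/2$, determined by the glide, and it need not pass through a grid vertex. The condensed proof of Theorem 2 says only that ``an analogous argument applies.'' I would therefore check carefully that the $\mathbf{b}$-rotation argument holds wherever the axis lies, whether on a vertical line of stitching or midway between two such lines. I expect that each placement gives the required equality only up to a cyclic shift of the periodic word $x$ (or a complementation of $y$). Each such case would then be excluded by (I), applied to $y$ or to a suitable finite period of $x$.
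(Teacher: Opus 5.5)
Your proof is correct, but it takes a different route from the paper's. The paper computes directly with the words. By (II) the mirror forces $x$ to be built from $ww^R$, and the glide length is taken to be $|w|$ or $2|w|$. In each case the image of $(ww^R,y)$ is compared with the back $(w^Cw^{RC},y^C)$, and (I) excludes both $y=y^C$ and $w=w^{RC}$ for $|w|$ odd.

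You instead observe that the mirror perpendicular to $\mathbf{a}$, composed with the glide reflection in the frieze plane, is a $180^\circ$ rotation about a line parallel to $\mathbf{b}$. That makes Theorem 3 a corollary of the $\mathbf{b}$-case of Theorem 2. Your route needs neither (II) nor any analysis of the glide length. It also shows that pm2a, pmma and pmaa all fail for one common reason.

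Two corrections are needed.
\begin{itemize}
\item pmma and pmaa are \emph{not} pm2a. Their point group is mmm, and their short labels carry no 2 in the third position. So the \emph{statement} of Theorem 2 does not literally cover them, and you genuinely need its \emph{argument}, as you say at the outset.
\item In Step 2 the roles are swapped. A rotation about a line parallel to $\mathbf{b}$ preserves top and bottom, so $x$ is only reversed, never complemented. It is $y$ that is fixed or complemented.
\end{itemize}

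The price of your route is the axis-placement check. The paper sidesteps it by asserting the $\mathbf{b}$-case as ``analogous'', and the check is easy. The grid must be carried to itself, so the axis lies in one of two places:
\begin{itemize}
\item Midway between two vertical lines. Then each horizontal segment straddling the axis is carried to itself, so $y\mapsto y$, and matching the back forces $y=y^C$, contradicting (I).
\item On a vertical line. Then $y\mapsto y^C$, which agrees with the back. But that vertical line is carried to itself with its state unchanged, so an odd-length window of $x$ centred on the axis would satisfy $z^R=z^C$. This contradicts the second part of (I). Here a centred window, not a period of $x$, is the right word to use.
\end{itemize}
Put simply, in either case some stitch position is fixed by a map that exchanges the two sides. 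That is impossible, since every position carries a stitch on exactly one side.
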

\begin{proof} Assume such frieze patterns can be realised.
 Because of the reflection indicated by `m' in the second position, the vertical lines of stitching are specified by repetitions of a word with the structure $ww^R$ by (II). Potential glides have length either $|w|$ or $|x|=2|w|$ (because the transformation `a' is an involution). The word  $y$ is unchanged under an even-length translation parallel to $\mathbf{a}$, and is mapped to $y^C$ under an odd-length translation parallel to $\mathbf{a}$.
 
 Apply the transformation indicated by `a' in the fourth position, with the glide equal to $2|w|$: 
 \[
ww^R \mapsto ww^{R}\qquad \textrm{and}\qquad y \mapsto 
y.
\]
We compare the pattern thus generated with that specified by $w^Cw^{RC}$ and $y^C$. But $y \neq y^C$ by (I). If any such patterns exist, the glide must be by $|w|$. Applying the same transformation with this glide length, 
 \[
ww^R \mapsto w^{R}w\qquad \textrm{and}\qquad y \mapsto \begin{cases}
y&|w| \textrm{ even}\\y^C& |w|\textrm{ odd}
\end{cases}.
\]
We compare the pattern thus generated with that specified by $w^Cw^{RC}$ and $y^C$. Because $y \neq y^C$ by (I), if such patterns exist $|w|$ is not even.  But for $|w|$ odd, $w \neq w^{RC} $ by the second part of (I). 

Thus such frieze patterns cannot be created in hitomezashi.
\end{proof}


These results, summarised in Table 2, rule out hitomezashi realisations of eighteen of the thirty-one two-sided friezes. Of course, to establish that the remaining friezes can be realized, an existence proof is required. This is to be found in Figures~1--13, stitched by the author on Aida band with embroidery floss. Within the stitching we see some traditional hitomezashi motifs \cite{SeatHayes}: small squares (mouths), crosses, castellations, linked steps, and  mountain form. The overbar notation used in the captions indicates the repeating unit in the periodic word $x$.
Figure~2 shows a frieze of the type p1m1, which can instructively be compared to p1a1 in Figure~1.
Figures~3--7 show the five friezes with labels of the form p11$\ast$, again inviting comparison; they provide examples of various symmetries relative to $\mathbf{c}$. The only two-sided frieze with the roto-reflection $\tilde{2}$ is in this group.
In Figures 8--11 we display friezes with symmetry under vertical reflection.
Finally, Figures~11--13 provide examples of friezes symmetric under the screw transformation.

\begin{table}[h!tbp]
\centering
\caption{Two-Sided Frieze Symmetries Incompatible or Compatible with Hitomezashi Constraints.}
\begin{tabular}{|c|cc|c||cc|}
\hline
\multicolumn{4}{|c||}{Incompatible}\rule{0pt}{2.5ex}&\multicolumn{2}{|c|}{\multirow{2}{*}{Compatible}}\\
\cline{1-4}
  
Theorem 1 \rule{0pt}{2.5ex}& \multicolumn{2}{|c|}{Theorem 2} & Theorem 3 & &  \\
\hline
\multicolumn{2}{|c}{\phantom{XXX}{p2mm}}&&&p1a1&pma2\rule{0pt}{2.5ex}\\[1.2ex]
\multicolumn{2}{|c}{{\phantom{XXX}}{pm2m} }&&&p1m1  &pmm2\\[1.2ex]
pmmm &{p2aa}&p211&&  p11a &pm11\strut\\[1.2ex]
pmam &p222&p$\stackrel{2}{\textrm{m}}11$&& p11$\stackrel{2}{\textrm{a}}$&p$\stackrel{2^\prime}{\textrm{m}}$11\\[1.2ex]
p11m &p121 &p2$^\prime$22& pmma& p112 &p2$^\prime$ma \\[1.2ex]

p11$\stackrel{2}{\textrm{m}}$  & p1$\stackrel{2}{\textrm{m}}$1&p1$\stackrel{2}{\textrm{a}}$1&pmaa&p11$\tilde{2}$&p2$^\prime$11\\
 p2$^\prime$am&&\multicolumn{2}{c||}{pm2a\phantom{XXX} }\rule{0pt}{3ex}&p111&\\[1.2ex]
\hline
\end{tabular}
\end{table}

\begin{figure}[h!tbp]
	\centering
	\includegraphics[width=0.95\textwidth, angle=180]{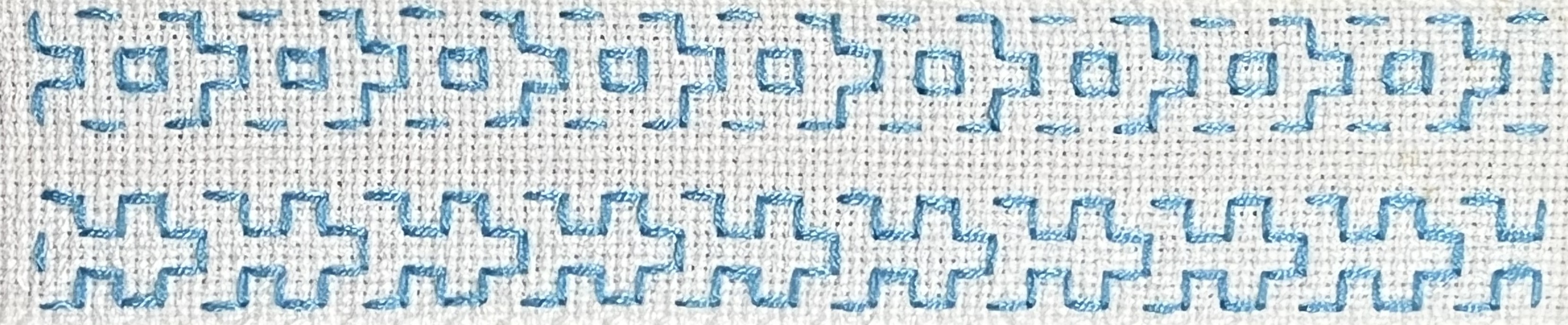}
	\caption{The two sides of a p1m1 hitomezashi frieze; $x=\overline{1110}$, $y=0110$.\\ Compare with p1a1 in Figure 1 where $x=\overline{1000110}$, $y=100110$. }
	\label{p1m1}
\end{figure}

\begin{figure}[h!tbp]
	\centering
	\includegraphics[width=0.95\textwidth, angle=180]{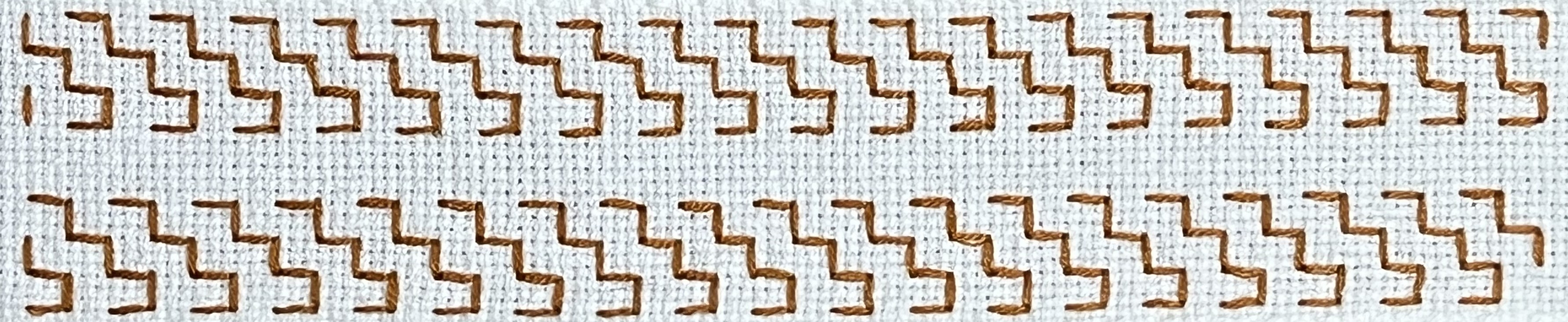}
	\caption{The two sides of a p11a hitomezashi frieze. $x=\overline{01} $, $y=0100$.}
	\label{p11a}
\end{figure}

\begin{figure}[h!tbp]
	\centering
	\includegraphics[width=0.95\textwidth, angle=180]{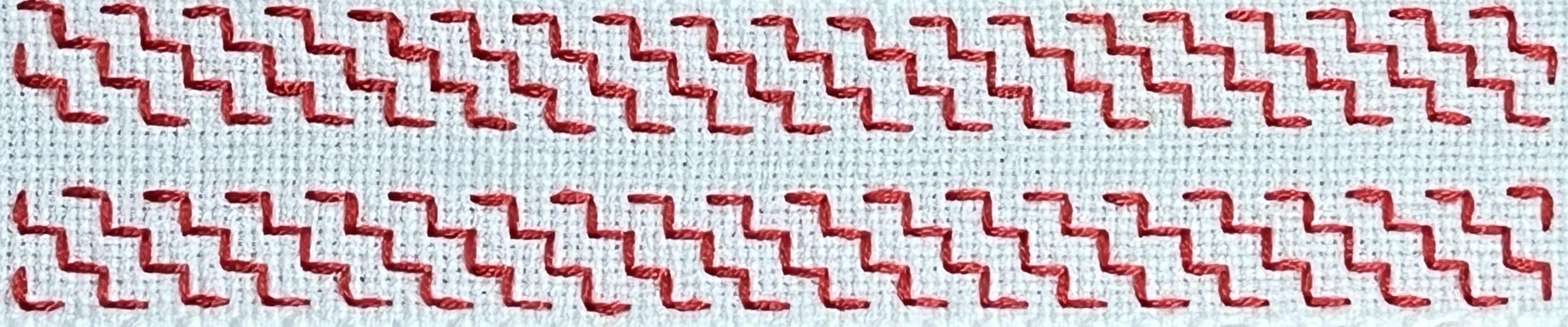}
	\caption{The two sides of a p11$\stackrel{2}{\textrm{a}}$ hitomezashi frieze. $x=\overline{10} $, $y=1010$.\\ This is  the traditional dan tsunagi (linked steps) stitch.}
	\label{p112a}
\end{figure}

\begin{figure}[h!tbp]
	\centering
	\includegraphics[width=0.95\textwidth]{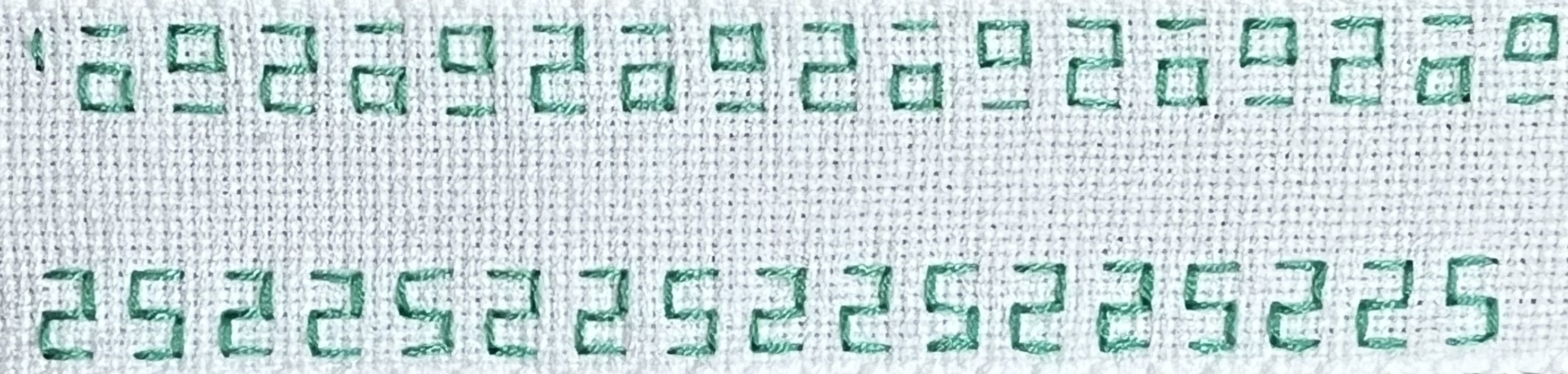}
	\caption{The two sides of a p112 hitomezashi frieze. $x=\overline{011001} $, $y=000$. }
	\label{p112}
\end{figure}

\begin{figure}[h!tbp]
	\centering
	\includegraphics[width=0.95\textwidth]{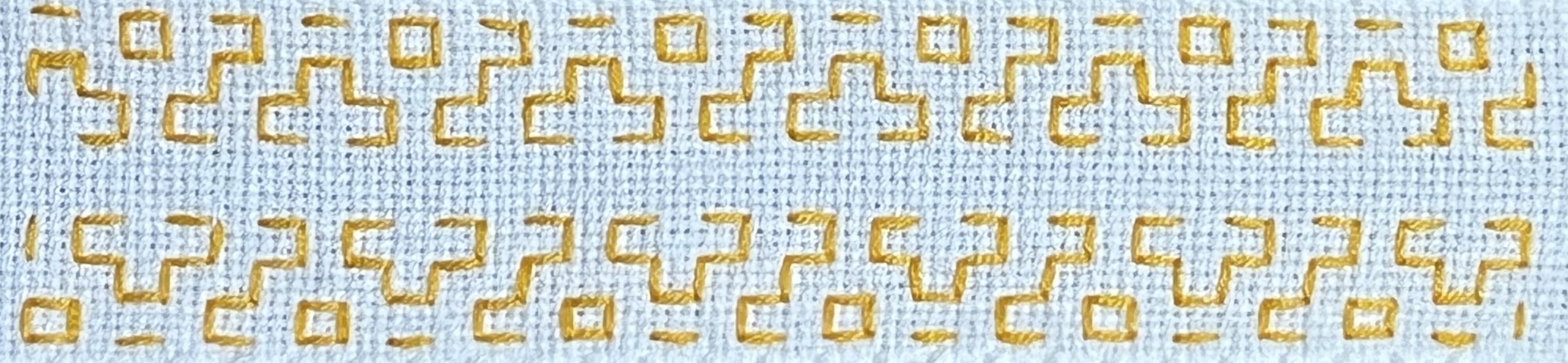}
	\caption{The two sides of a p11$\tilde{2}$ hitomezashi frieze. $x=\overline{001101} $, $y=0011$.}
	\label{p112tilde}
\end{figure}

\begin{figure}[h!tbp]
	\centering
	\includegraphics[width=0.95\textwidth]{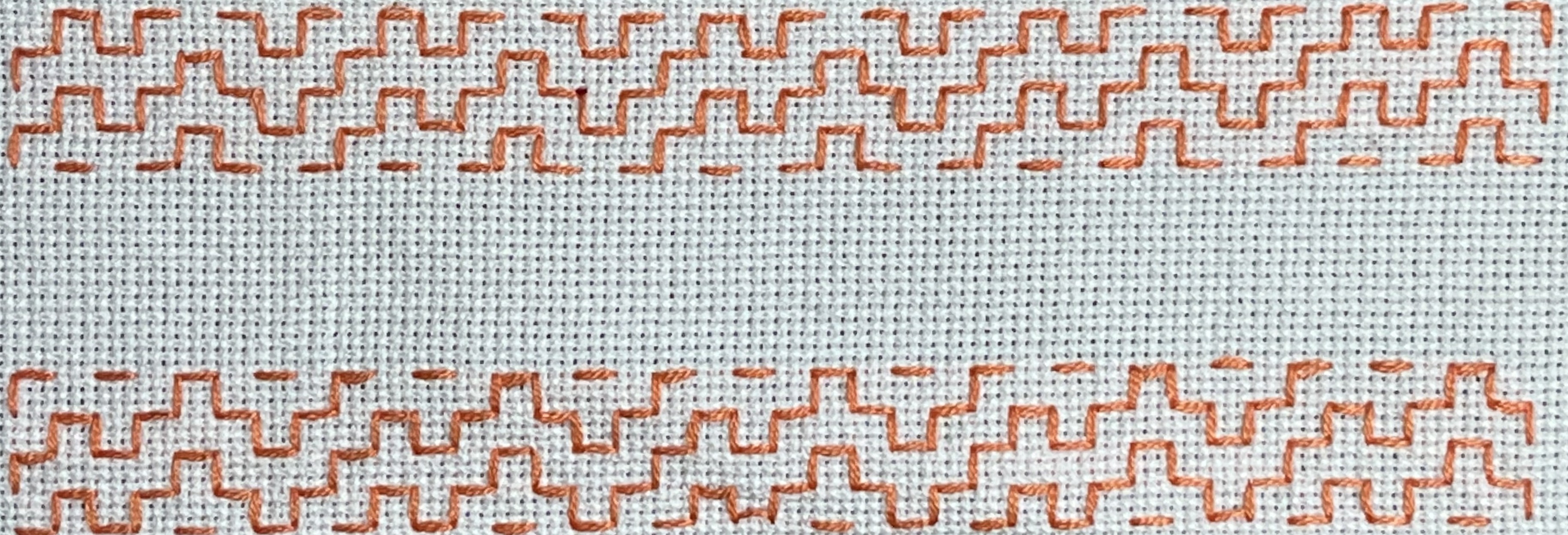}
	\caption{The two sides of a p111 hitomezashi frieze. $x= \overline{10001100}$, $y=01010$.}
	\label{p111}
\end{figure}

\begin{figure}[h!tbp]
	\centering
	\includegraphics[width=0.95\textwidth, angle=180]{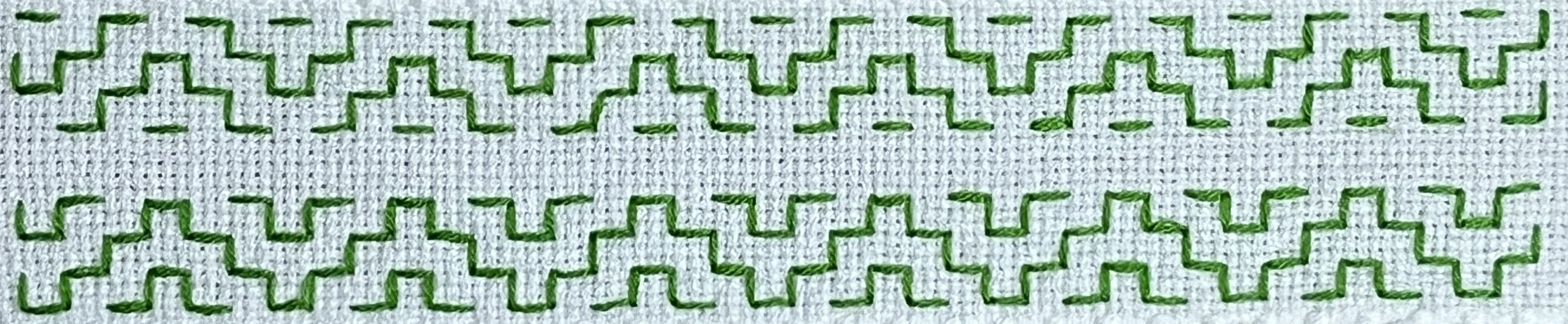}
	\caption{The two sides of a pma2 hitomezashi frieze. $x= \overline{001}$, $y=1010$.}
	\label{pma2}
\end{figure}

\begin{figure}[h!tbp]
	\centering
	\includegraphics[width=0.95\textwidth]{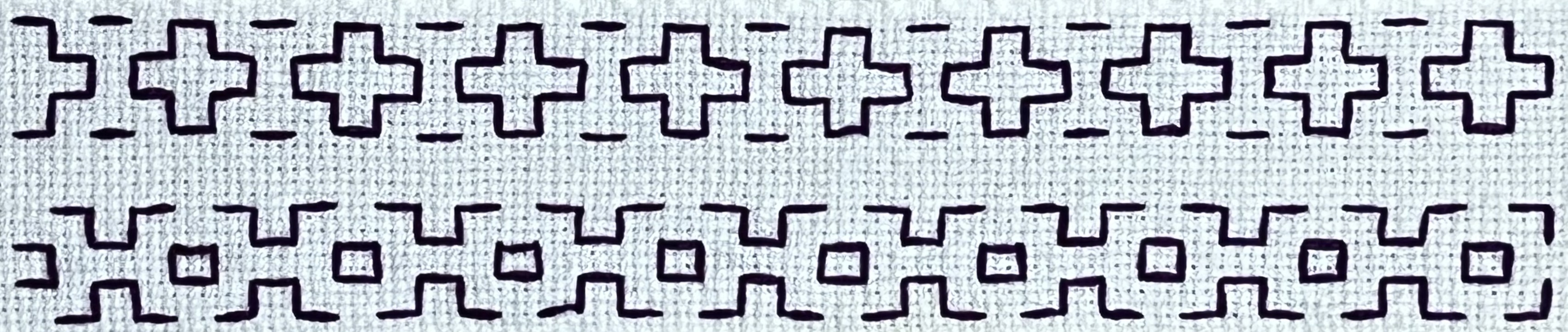}
	\caption{The two sides of a pmm2 hitomezashi frieze. $x=\overline{1001}$, $y=1001$. }
	\label{pmm2}
\end{figure}

\begin{figure}[h!tbp]
	\centering
	\includegraphics[width=0.95\textwidth]{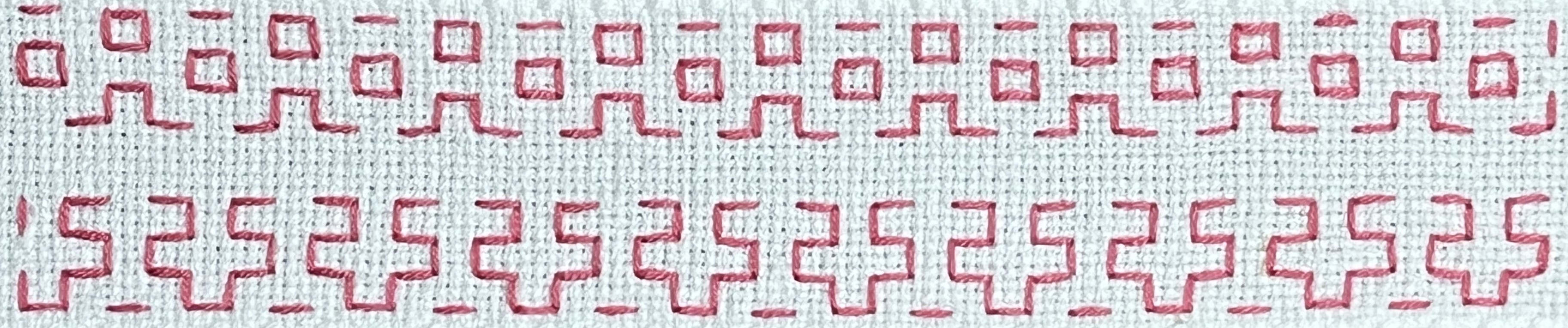}
	\caption{The two sides of a pm11 hitomezashi frieze. $x= \overline{0011}$, $y=0111$.}
	\label{pm11}
\end{figure}

\begin{figure}[h!tbp]
	\centering
	\includegraphics[width=0.95\textwidth]{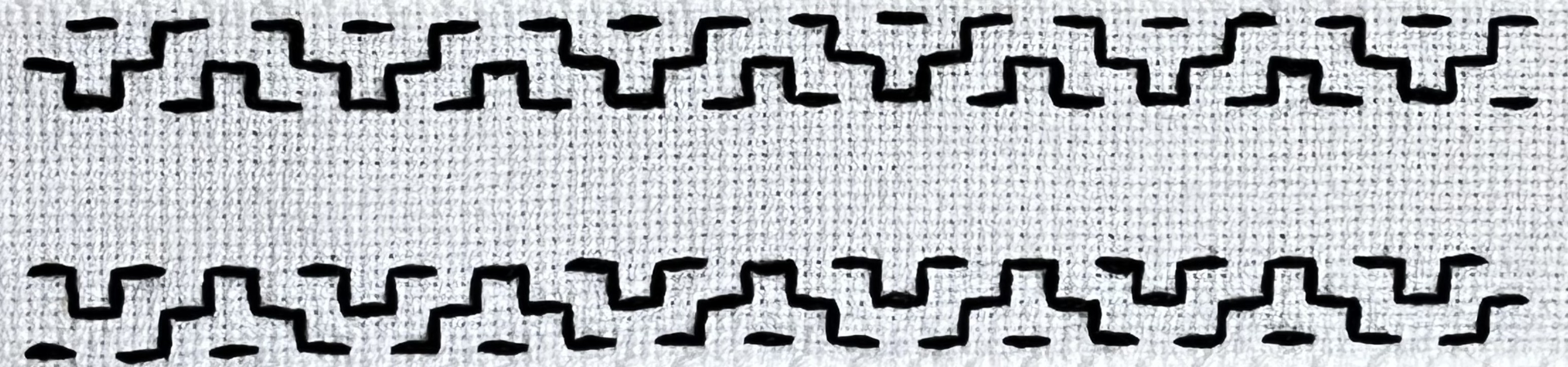}
	\caption{The two sides of a p$\stackrel{2^\prime}{\textrm{m}}$11 hitomezashi frieze. $x=\overline{011} $, $y=010$.}
	\label{p2primem11}
\end{figure}
\newpage
\begin{figure}[h!tbp]
	\centering
	\includegraphics[width=0.95\textwidth]{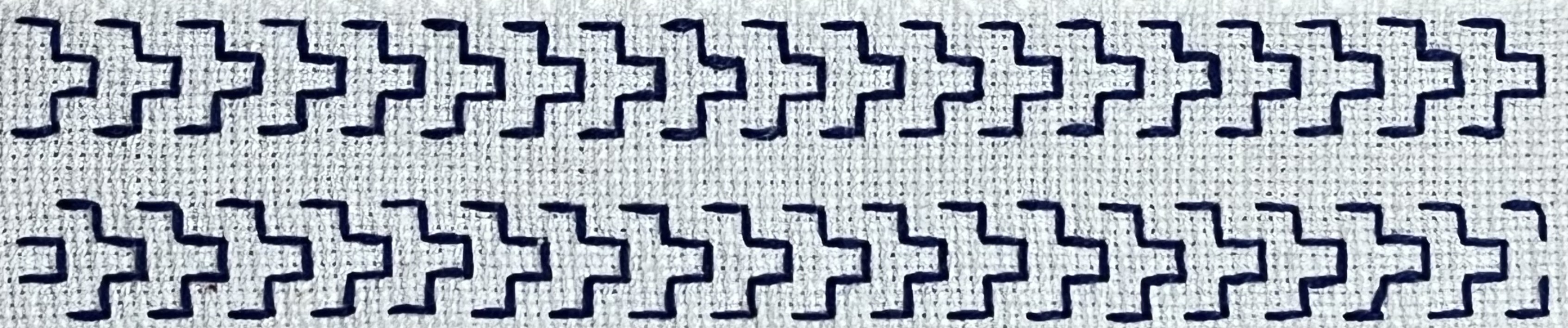}
	\caption{The two sides of a p2$^\prime$ma hitomezashi frieze. $x=\overline{01} $, $y=1001$.}
	\label{p2primema}
\end{figure}

\begin{figure}[h!tbp]
	\centering
	\includegraphics[width=0.95\textwidth, angle=180]{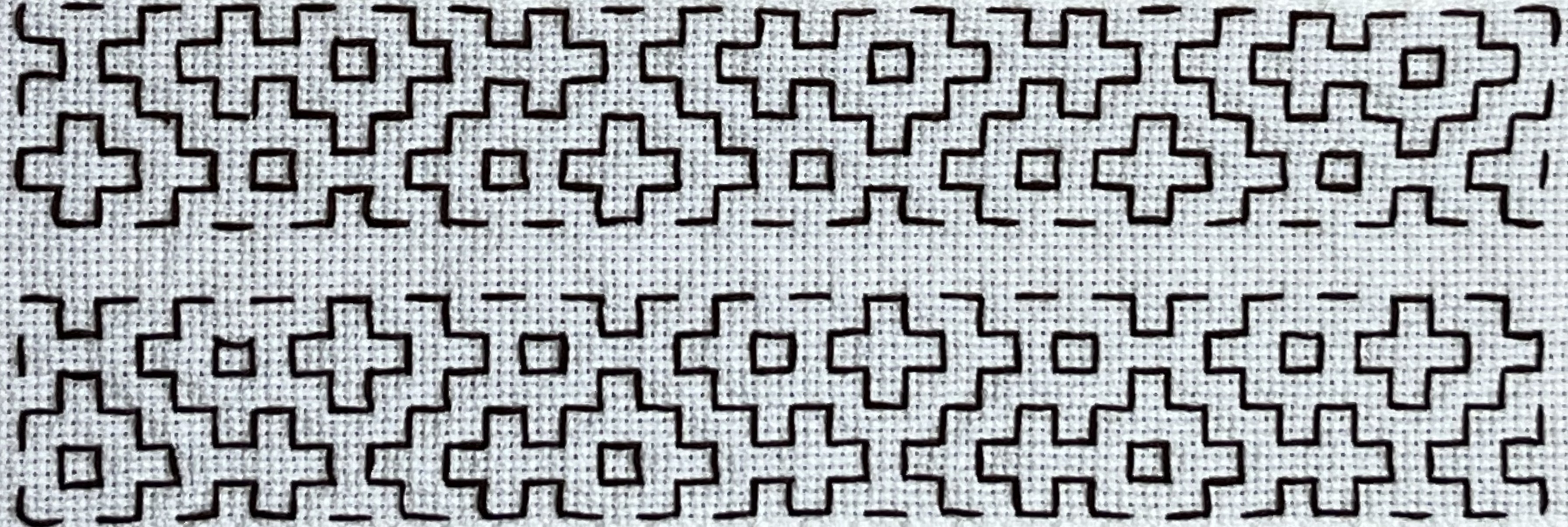}
	\caption{The two sides of a p2$^\prime$11 hitomezashi frieze. $x=\overline{0101100} $, $y=0110110$.}
	\label{p2prime11}
\end{figure}


    
{\setlength{\baselineskip}{13pt} 
\raggedright				

} 
   
\end{document}